\newtheorem{theorem}{Theorem}
\newtheorem{corollary}{Corollary}
\newcommand{\E}{{\mathbf E}}
\begin{document}
\renewcommand{\refname}{References}
\renewcommand{\proofname}{Proof.}
\renewcommand{\figurename}{Fig.}

\thispagestyle{empty}

\title[Limit theorems in infinite urn schemes]{Limit theorems for forward and backward processes 
of numbers of non-empty urns
in infinite urn schemes}
\author{{M.G. Chebunin, A.P. Kovalevskii}}%
\address{Mikhail Georgievich Chebunin
\newline\hphantom{iii} Karlsruhe Institute of Technology,
\newline\hphantom{iii} Institute of Stochastics,
\newline\hphantom{iii} 76131, Karlsruhe, Germany;
\newline\hphantom{iii} Novosibirsk State University, 
\newline\hphantom{iii} Pirogova str., 1,
\newline\hphantom{iii} 630090, Novosibirsk, Russia}%

\email{chebuninmikhail@gmail.com}%

\address{Artyom Pavlovich Kovalevskii
\newline\hphantom{iii} Novosibirsk State Technical University, 
\newline\hphantom{iii} K. Marksa ave., 20,
\newline\hphantom{iii} 630073, Novosibirsk, Russia%
\newline\hphantom{iii} Novosibirsk State University, 
\newline\hphantom{iii} Pirogova str., 1,
\newline\hphantom{iii} 630090, Novosibirsk, Russia}%

\email{artyom.kovalevskii@gmail.com}%

\thanks{\sc Chebunin, M.G., Kovalevskii, A.P.,
Limit theorems for forward and backward processes 
of numbers of non-empty urns
in infinite urn schemes}
\thanks{\copyright \ 2022 Chebunin M.G., Kovalevskii A.P.}
\thanks{\rm The work is supported by Mathematical Center in Akademgorodok under agreement No. 075-15-2019-1675 with the Ministry of Science and Higher Education of the Russian Federation.}
\thanks{\it Received November, 1, 2022, published December, 1,  2022.}%

\maketitle {\small
\begin{quote}
\noindent{\sc Abstract. } 

%

We study the joint asymptotics of  forward and backward processes of  numbers of non-empty urns in an infinite urn scheme.
The probabilities of balls hitting the urns are assumed to satisfy the conditions of regular decrease.
We prove weak convergence to a two-dimensional Gaussian process. Its covariance function depends only on
exponent of regular decrease of probabilities. We obtain parameter estimates that have a normal asymototics for its joint distribution together with forward and backward processes.
We use these estimates to construct statistical tests for the homogeneity of the urn scheme on the number of thrown balls.
\medskip

\noindent{\bf Keywords:} Zipf's law,  weak convergence, Gaussian process, statistical test.
 \end{quote}
}

\section{Introduction}

Let $X_1,\ldots,X_n$ be independent and identically distributed positive integer-valued random variables,
\begin{equation}\label{a1}
p_i={\bf P}(X_1=i), \quad \sum_{i=1}^{\infty}p_i=1.
\end{equation}

The number of different elements among first $k$ ones ($2\le k\le n$) is
\begin{equation}\label{a2}
R_k=1+\sum_{i=2}^k {\bf 1}(X_i\not \in \{X_1,\ldots,X_{i-1}\}).
\end{equation}

Similarly, the number of different elements among {\em last} $k$ ones ($2\le k\le n$) is
\begin{equation}\label{a3}
R_k'=1+\sum_{i=n-k+1}^{n-1} {\bf 1}(X_i\not \in \{X_{n-k+2},\ldots,X_{n}\}).
\end{equation}

In other words, 
\begin{equation}\label{a4}
R_k'=1+\sum_{i=2}^k {\bf 1}(X_i'\not \in \{X_1',\ldots,X_{i-1}'\}),
\end{equation}
with $X_i'=X_{n-i+1}$, the random variables in the 
backward order, $1 \le i \le n$.

We put by definition
\begin{equation}\label{a5}
R_0=R_0'=0, \quad R_1=R_1'=1.
\end{equation}

Distributions of $R_n$ and $R_n'$ are identical, their limiting properties are known.
We study their limiting joint disrtibution under the appropriate centering and normalizing.

If there is an infinite number of positive probabilities in (\ref{a1}) then this probability model is the infinite
urn scheme. 
Karlin (1967) established the SLLN for $R_n$ in the infinite urn scheme (Bahadur (1960) proved the weak LLN),
\begin{equation}\label{a6}
R_n/{\bf E} R_n \to 1 \quad \text{a.s.}
\end{equation}

Now we need the regularity condition. Let $p_1 \ge p_2\ge \ldots >0$ and
\begin{equation}\label{a7}
\alpha(x):=\max\{k>0: \ p_k \ge 1/x\}= x^{\theta} L(x) \quad \text{as} \quad x \to \infty, \quad 
0<\theta< 1,
\end{equation}
$L(\cdot)$ is the slowly varying function of the real argument: 
$L(tx)/L(x)\to 1$ as $x \to  +\infty$ for any real $t>0$.  

From Karamata's characterization theorem, $\alpha(x)$ is a regulary varying function with index $\theta$.
The model (\ref{a6}) is the elementary probability model that corresponds to the Zipf's Law (Zipf, 1936) 
of power decreasing of  word probabilities.

Karlin (1967) proved the CLT: if (\ref{a6}) holds then $(R_n - {\bf E} R_n)/\sqrt{{\bf Var} R_n}$ converges weakly 
to the standard normal distribution,
\begin{equation}\label{a8}
{\bf E} R_n \sim \Gamma(1-\theta)\alpha(n), \quad {\bf Var} R_n/{\bf E} R_n \to 2^{\theta}-1,
\end{equation}
$\Gamma(\cdot)$ is the Euler gamma.

From the Karlin's CLT and (\ref{a7}), $(R_n - {\bf E} R_n)/\sqrt{{\bf E} R_n}$ converges weakly 
to the centered normal distribution with variance $2^{\theta}-1$. The CLT holds for $\theta=1$ too
but with another normalization.

Chebunin and Kovalevskii (2016) proved the Functional CLT: if (\ref{a6}) holds then the process 
\begin{equation}\label{a9}
Z_n=\{Z_n(t), \ 0 \le t \le 1\}=\{(R_{[nt]}-{\bf E} R_{[nt]})/\sqrt{{\bf E} R_n}, \ 0\le t \le 1\}
\end{equation}
 converges weakly in $D(0,1)$ with uniform metrics to a centered Gaussian
process $ Z_{\theta} $ with continuous a.s. sample paths and covariance function 
\begin{equation}\label{a10}
K(s,t)=(s+t)^{\theta}-\max(s^{\theta}, t^{\theta}).
\end{equation}

The Karlin's CLT is a partial case of the FCLT for $Z_n(1)$. The same FCLT is true for
\begin{equation}\label{a11}
Z_n'=\{Z_n'(t), \ 0 \le t \le 1\}=\{(R_{[nt]}'-{\bf E} R_{[nt]})/\sqrt{{\bf E} R_n}, \ 0\le t \le 1\}.
\end{equation}

We prove the theorem about the joint limiting distribution of $(Z_n, Z_n')$.

All the papers on properties of $R_n$ and similar statistics in the infinite urn scheme can be divided into 4 types:

1. Results under the regularity condition (\ref{a7}): the papers above, Durieu $\&$ Wang (2016),
Chebunin (2017),  Durieu, Samorodnitsky $\&$ Wang (2020), Chebunin $\&$ Zuyev (2020).

2. Results under (\ref{a7}) with $\theta=0$ instead of $0<\theta\le 1$, that is, for the slowly varying function $\alpha(x)$:
Dutko (1989), Barbour (2009), Barbour $\&$ Gnedin (2009).

3. Results for the model without assuming the regularity condition (\ref{a7}): Key (1992, 1996), 
Hwang  $\&$ Janson (2008), Muratov $\&$ Zuyev (2016),
Ben-Hamou, Boucheron $\&$ Ohannessian (2017), Decrouez, Grabchak $\&$  Paris (2018).

4. Statistical applications --- we postpone the survey of these results to Section~2. 

Gnedin, Hansen $\&$ Pitman (2007) made a detalied survey of the results of types 1--3 existed at the time.

\section{Main results}

\begin{theorem}

If (\ref{a7}) holds then the process
$
(Z_n, Z'_n)=
\left\{(Z_n(t), Z'_n(t)),
\ 0 \leq t \leq 1 \right\}
$
converges weakly in the uniform metrics in  $D(0,1)^2$ 
to 2-dimensional Gaussian process $(Z, Z')$ with zero expectation and covariance function 
\[
{\bf E} Z(s)Z(t)={\bf E} Z'(s)Z'(t)=K(s,t), \ \ {\bf E} Z(s)Z'(t)=K'(s,t), 
\]
where $K(s,t)$ done by (\ref{a10}),  and 
\begin{equation}\label{a11}
K'(s,t)= ((s+t)^\theta-1){\bf  1}(s+t>1).
\end{equation}
\end{theorem}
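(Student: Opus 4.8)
The plan is to use the standard Poissonization–de-Poissonization scheme that underlies the marginal FCLTs of Chebunin and Kovalevskii (2016), but to carry it out jointly for the forward and backward processes. First I would introduce a Poisson process $\Pi$ of rate one on $[0,\infty)$, run it on $[0,n]$, and split the arrivals among the urns by the probabilities $p_i$; this gives independent Poisson processes $\Pi_i$ of rates $np_i$ in each urn. Let $R_{n,t}^{\Pi}$ be the number of urns hit in the time window $[0,t]$ (rescaled so $t\in[0,1]$ corresponds to $[0,nt]$) and $R_{n,t}^{\prime\Pi}$ the number of urns hit in the window $[1-t,1]$. Because disjoint time windows correspond to independent families of Poisson variables, the pair of centered processes $\bigl(R_{n,\lfloor nt\rfloor}^{\Pi}-\E R^{\Pi},\,R_{n,\lfloor nt\rfloor}^{\prime\Pi}-\E R^{\prime\Pi}\bigr)/\sqrt{\E R_n}$ is, for each fixed $t$, a sum of independent bounded (indicator) terms, so finite-dimensional convergence to a Gaussian limit follows from the Lindeberg–Feller CLT; the only real content is computing the limiting covariance.

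The covariance computation is where the regularity condition (\ref{a7}) enters. For the forward–forward and backward–backward blocks this is already done in the cited FCLT and yields $K(s,t)$ in (\ref{a10}). For the cross term $\E Z(s)Z'(t)$ I would compute, for $s+t$ fixed, the probability that a given urn $i$ is hit both in $[0,ns]$ and in $[n(1-t),n]$; since these windows are disjoint when $s+t<1$, this event has probability $(1-e^{-np_i s})(1-e^{-np_i t})$ and the covariance of the two indicators is $0$, so $K'(s,t)=0$ there — consistent with the indicator ${\bf 1}(s+t>1)$. When $s+t>1$ the windows overlap in an interval of length $n(s+t-1)$, and one is led to sums of the form $\sum_i\bigl(1-e^{-np_i(s+t-1)}\bigr)$ against the background terms; using Karlin's asymptotics $\sum_i(1-e^{-np_i u})\sim \Gamma(1-\theta)\,u^{\theta}\alpha(n)$ together with $\E R_n\sim\Gamma(1-\theta)\alpha(n)$ and inclusion–exclusion, the limiting cross-covariance collapses to $(s+t)^{\theta}-1$, giving (\ref{a11}). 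I would organize this as a short lemma: $\lim_n \mathrm{Cov}$ of the Poissonized indicators $=K'(s,t)$.

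Next I would upgrade finite-dimensional convergence to weak convergence in $D(0,1)^2$ with the uniform metric by proving tightness. Each coordinate process is already tight by the marginal FCLT, and tightness of the pair follows from tightness of the coordinates (a standard fact for product-space $D$, via the coordinatewise modulus-of-continuity criterion); alternatively one checks the moment bound $\E\bigl(\Delta\text{-increment}\bigr)^2 \lesssim$ (length) directly for both the forward and backward Poissonized processes, which is the same estimate used in the cited paper. Finally, de-Poissonization transfers the result from the Poisson-sample-size model back to the fixed-$n$ model: the number of balls $N(n)\sim n$ with fluctuations of order $\sqrt n$, and since $\E R_n$ grows like $\alpha(n)=n^{\theta}L(n)$ with $\theta<1$, the difference between $R$ at a Poisson time and at the deterministic time $n$ is negligible after dividing by $\sqrt{\E R_n}$; this is exactly the de-Poissonization lemma of Chebunin–Kovalevskii, applied simultaneously to $Z_n$ and $Z_n'$, and it completes the argument.

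The main obstacle is the cross-covariance computation, and specifically handling the overlap region $s+t>1$ cleanly: one must track which urns are counted in both windows versus only one, so the naive "covariance of indicators on disjoint sets is zero" argument no longer applies and a careful inclusion–exclusion over the three sub-windows (forward-only, overlap, backward-only) is needed, combined with the regular-variation asymptotics to see the slowly varying factor $L(n)$ and the constant $\Gamma(1-\theta)$ cancel against $\E R_n$, leaving the clean expression $((s+t)^{\theta}-1){\bf 1}(s+t>1)$. Everything else — Lindeberg, tightness, de-Poissonization — is routine adaptation of the one-dimensional arguments already established in the literature cited in the excerpt.
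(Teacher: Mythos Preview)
Your proposal is correct and follows essentially the same route as the paper: Poissonize, compute the cross-covariance by splitting into the disjoint case $s+t\le 1$ (covariance zero) and the overlapping case $s+t>1$ via inclusion--exclusion over the three sub-windows, use Karlin's asymptotics to reduce to $(s+t)^\theta-1$, then invoke Lindeberg for finite-dimensional convergence, marginal tightness from the one-dimensional FCLT for tightness of the pair, and the de-Poissonization lemma of Chebunin--Kovalevskii (2016) to return to fixed $n$. The paper's Step~1 carries out exactly the inclusion--exclusion you describe, arriving at $\mathrm{cov}=\mathbf{1}(s+t>1)\bigl(\E R_{\Pi((s+t)n)}-\E R_{\Pi(n)}\bigr)$, and Steps~2--4 are the routine adaptations you anticipate.
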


From Theorem 1 we have that the limiting process $\{(Z(t)-Z'(t))/\sqrt{2}, \ 0 \le t \le 1/2\}$ is the 
stochastically self-similar process which coinside in distribution with the limiting process of  Durieu and Wang (2016).
So Theorem 1 gives an alternative way to simulate these processes without additional randomization.

We need some estimate of the unknown parameter $\theta$ to use the theorem in applications.
 Various  classes of such estimates  have been obtained and analysed  by Hill (1975), Nicholls (1978), 
Zakrevskaya and Kovalevskii (2001, 2019), Guillou and Hall (2002), Ohannessian and Dahleh (2012), 
Chebunin (2014), Chebunin and Kovalevskii (2019a, 2019b), Chakrabarty et al. (2020).

But we need an estimate that is symmetric to the forward and backward processes.
Moreover, we want to have the limiting joint distribution of the estimate and the two-dimensional process.
We introduce the estimate and study its propetries in the next section.

\section{Parameter's estimation }

From (\ref{a6}) and (\ref{a8}),  we have $\log R_n\sim{\theta}\log n$  a.s.
Therefore, we may propose the following estimators for parameter $\theta$:
\[
\theta_n=\int_0^1  \log^+ R_{[nt]} \, d A(t), \ \ \ \theta'_n=\int_0^1  \log^+ R'_{[nt]} \, d A(t),
\]
here $\log^+ x=\max(\log x, \, 0)$.
Function $A(\cdot)$ has bounded variation and
\begin{equation}\label{intalog}
A(0)=A(1) =0, \ \ \lim_{x \downarrow 0} \log x \int_0^x |dA(t)| =0, \ \ \int_0^1 \log t \, dA(t) =1. 
\end{equation}
Let
\[
\widehat{\theta}=(\theta_n+\theta'_n)/2.
\]

\begin{theorem}
Let $p_i =i^{-1/\theta} l(i,\theta)$, $\theta\in [0,1]$, and  $l(x,\theta)$ 
is a slowly varying function as $x \to \infty$. Then 
the estimator $\widehat{\theta}$ is strongly consistent.
\end{theorem}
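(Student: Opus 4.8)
\emph{Plan.} Since $\widehat\theta=(\theta_n+\theta'_n)/2$, it suffices to prove $\theta_n\to\theta$ and $\theta'_n\to\theta$ almost surely. Note that $R_m\ge1$ and $\E R_m\ge1$ for $m\ge1$, while $R_0=\E R_0=0$; hence $\log^+R_{[nt]}=\log^+\E R_{[nt]}+\log(R_{[nt]}/\E R_{[nt]})$ for $t\ge1/n$, and both sides vanish for $t<1/n$. This gives the decomposition $\theta_n=D_n+S_n$ with
\[
D_n=\int_0^1\log^+\E R_{[nt]}\,dA(t),\qquad S_n=\int_{1/n}^1\log\bigl(R_{[nt]}/\E R_{[nt]}\bigr)\,dA(t),
\]
$D_n$ being deterministic. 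Under the hypothesis $p_i=i^{-1/\theta}l(i,\theta)$ the function $\alpha$, and then (by Karlin's computation of $\E R_m$) also $\E R_m=\E R'_m$, is regularly varying of index $\theta$; write $\E R_m=m^\theta\ell(m)$ with $\ell$ slowly varying.

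\emph{The deterministic term.} Write $\log\E R_{[nt]}=\log\E R_n+\theta\log t+g_n(t)$, where $g_n(t)\to0$ for each fixed $t\in(0,1]$ (regular variation of index $\theta$) and $|g_n(t)|\le C(1+|\log t|)$ uniformly in $n$; the uniform bound is obtained from Potter's inequality, whose role is precisely to trade the innocuous but \emph{non-vanishing} factor $\log n$ for an arbitrarily small multiple of $|\log t|$. Then
\[
D_n=\log\E R_n\,\bigl(A(1)-A(1/n)\bigr)+\theta\int_{1/n}^1\log t\,dA(t)+\int_{1/n}^1 g_n(t)\,dA(t).
\]
The first summand tends to $0$ because $A(1)=0$, $\log\E R_n=\theta\log n+\log\ell(n)=O(\log n)$, and $|A(1/n)|\le\int_0^{1/n}|dA|=o(1/\log n)$ by the hypothesis $\log x\int_0^x|dA|\to0$; the second tends to $\theta\int_0^1\log t\,dA(t)=\theta$ by the normalization (the tail $\int_0^{1/n}\log t\,dA$ is negligible); the third tends to $0$ by dominated convergence using the pointwise decay and the uniform bound on $g_n$. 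Hence $D_n\to\theta$. This is exactly where $A(0)=A(1)=0$ and the log-regularity of $A$ at $0$ are used: they kill the bulk term $\log\E R_n$ and the boundary layer $\{t\lesssim1/n\}$, so that no factor $\log n$ survives.

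\emph{The stochastic term.} Fix $\varepsilon>0$. By Karlin's SLLN $R_m/\E R_m\to1$ a.s., so a.s. there is $M=M(\varepsilon,\omega)$ with $|\log(R_m/\E R_m)|<\varepsilon$ for all $m\ge M$. Split $S_n=\int_{1/n}^{M/n}+\int_{M/n}^1$. On $[M/n,1]$ the integrand is less than $\varepsilon$ in modulus, contributing at most $\varepsilon\int_0^1|dA|$; on $[1/n,M/n]$ monotonicity of $R$ gives $1\le R_{[nt]}\le R_M$, so the integrand is bounded by $\max(\log R_M,\log\E R_M)<\infty$ and $\int_{1/n}^{M/n}|dA|\le\int_0^{M/n}|dA|\to0$ as $n\to\infty$. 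Letting $n\to\infty$ and then $\varepsilon\to0$ gives $S_n\to0$ a.s., whence $\theta_n=D_n+S_n\to\theta$ a.s.

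\emph{The backward process and the main obstacle.} The same decomposition gives $\theta'_n=D_n+S'_n$ with the very same $D_n$ (since $\E R'_m=\E R_m$) and $S'_n=\int_{1/n}^1\log(R'_{[nt]}/\E R_{[nt]})\,dA(t)$. To rerun the previous paragraph for $S'_n$ one needs the backward analogue of Karlin's SLLN: $R'_{[nt]}/\E R_{[nt]}\to1$ a.s. for each $t\in(0,1]$, hence (by monotonicity of $R'_{[nt]}$ in $t$) uniformly on compact subsets of $(0,1]$. This is the genuinely new ingredient, because $R'_{[nt]}$ is \emph{not} monotone in the level $n$: the naive bound $R_n-R_{n-[nt]}\le R'_{[nt]}\le R_n$ only traps $R'_{[nt]}/\E R_{[nt]}$ in $[(1-(1-t)^\theta)/t^\theta,\,1/t^\theta]$, which is too weak, and the standard ``Borel--Cantelli along a subsequence + monotone interpolation'' must be adapted. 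The plan is: (i) along a geometric subsequence $n_j=\lceil\rho^j\rceil$, apply Chebyshev's inequality with ${\bf Var}\,R'_{[nt]}={\bf Var}\,R_{[nt]}=O(\E R_{[nt]})$ and $\sum_j(\E R_{[n_jt]})^{-1}<\infty$ to get $R'_{[n_jt]}/\E R_{[n_jt]}\to1$ a.s. for all rational $t$ simultaneously, then pass to all $t$ by monotonicity; (ii) for $n\in[n_j,n_{j+1}]$ sandwich the size-$[nt]$ window ending at $n$ between the windows ending at $n_{j+1}$ and at $n_j$ with slightly perturbed parameters $t''\le t\le t^{*}$, where $t'',t^{*}\to t$ as $\rho\downarrow1$; (iii) let $\rho\downarrow1$ and use regular variation of index $\theta$ of $\E R$ to squeeze $\limsup$ and $\liminf$ of $R'_{[nt]}/\E R_{[nt]}$ to $1$. (For $\theta\in(0,1)$ this is clean; the endpoints $\theta\in\{0,1\}$ need only minor modifications of the subsequence.) With this backward SLLN in hand the stochastic-term argument applies verbatim, giving $S'_n\to0$ a.s., hence $\theta'_n\to\theta$ a.s. and finally $\widehat\theta\to\theta$ a.s. The backward SLLN is the main obstacle; everything else is real-analysis bookkeeping of the two conditions on $A$, the only delicate point being not to lose a factor $\log n$ near $t=0$, which is why Potter's bound is used in the $D_n$ step.
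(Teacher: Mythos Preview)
Your argument is correct and, in fact, considerably more careful than what the paper offers. The paper's entire proof is a one-line citation: it invokes Theorem~1 of Chebunin--Kovalevskii (2019), which establishes $\theta_n\to\theta$ a.s.\ for the forward estimator, and then asserts that the claim for $\widehat\theta=(\theta_n+\theta'_n)/2$ follows ``from the definition''. Your decomposition $\theta_n=D_n+S_n$ and the treatment of each piece (regular variation and the conditions on $A$ for $D_n$, Karlin's SLLN for $S_n$) is exactly the content of that cited theorem, so on the forward side you are simply reconstructing the referenced proof.

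The genuinely new point you raise is that the backward estimator does \emph{not} reduce trivially to the forward one: although $(R'_1,\ldots,R'_n)\stackrel{d}{=}(R_1,\ldots,R_n)$ for each fixed $n$, the sequences $(\theta'_n)_{n\ge1}$ and $(\theta_n)_{n\ge1}$ need not have the same law, because the reversal map depends on $n$. Hence Karlin's SLLN for $R_m$ does not automatically yield $R'_{[nt]}/\E R_{[nt]}\to 1$ a.s. The paper glosses over this; you identify it and supply a legitimate fix via the standard variance bound $\mathbf{Var}\,R'_{[nt]}=\mathbf{Var}\,R_{[nt]}=O(\E R_{[nt]})$, Borel--Cantelli along a geometric subsequence, and a sandwich/monotonicity interpolation. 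That sketch is sound for $\theta\in(0,1)$ (the sum $\sum_j (\E R_{[n_j t]})^{-1}$ is geometrically convergent), and the endpoint cases are indeed routine variants. So your route coincides with the paper's intended one for $\theta_n$, and improves on it by making explicit the extra step needed for $\theta'_n$.
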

\begin{proof}
The proof follows from the definition of $\widehat{\theta}$ and Theorem 1 from Chebunin and Kovalevskii (2019).
\end{proof}

We need extra conditions to obtain the asymptotic normality of $\widehat{\theta}$.
 
\begin{theorem}
Let $p_i=ci^{-1/\theta}(1+o(i^{-1/2}))$,
 $\theta \in (0,1)$, and
 $A(t)=0$, $t \in [0,\, \delta]$ for some $\delta\in(0,\, 1)$. Then
\[
\sqrt{{\bf E}R_n} (\widehat{\theta}-\theta) - \frac12 \int_0^1  t^{-\theta} (Z_n(t)+Z'_n(t)) \, dA(t) \to_p 0.
\]
\end{theorem}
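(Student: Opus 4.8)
The plan is to reduce $\sqrt{\E R_n}\,(\widehat\theta-\theta)$ to the stated linear functional of $Z_n$ and $Z'_n$ by a logarithmic decomposition followed by a linearization of $\log(1+x)$. Since $\widehat\theta=(\theta_n+\theta'_n)/2$ and $\E R'_{[nt]}=\E R_{[nt]}$, it suffices to prove
\[
\sqrt{\E R_n}\,(\theta_n-\theta)-\int_0^1 t^{-\theta}Z_n(t)\,dA(t)\to_p 0
\]
together with the analogue for $(\theta'_n,Z'_n)$; the backward version is identical because, by Theorem~1, $Z'_n$ satisfies the same functional CLT as $Z_n$. As $A\equiv 0$ on $[0,\delta]$, all the integrals below run over $[\delta,1]$, where $[nt]\to\infty$ uniformly; hence $R_{[nt]}\ge 1$, so $\log^+R_{[nt]}=\log R_{[nt]}$ once $n\ge 1/\delta$, and the singularities of $t^{-\theta}$ and of $\log t$ at the origin are irrelevant. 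I would then write
\[
\log R_{[nt]}=\log\E R_n+\log\frac{\E R_{[nt]}}{\E R_n}+\log\frac{R_{[nt]}}{\E R_{[nt]}},
\]
and use $\int_0^1 dA(t)=A(1)-A(0)=0$, so that the first summand disappears after integration against $dA$.

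For the deterministic term I would establish a second-order mean asymptotics. Under $p_i=ci^{-1/\theta}(1+o(i^{-1/2}))$ the urns that dominate $R_m$ are those with index of order $m^\theta$, where the relative perturbation of $p_i$ away from the exact power law is $o(m^{-\theta/2})$; combined with the fact that for the exact law $p_i=ci^{-1/\theta}$ the Euler--Maclaurin error in Karlin's asymptotics is $O(1)=o(m^{\theta/2})$, this yields
\[
\E R_m=\Gamma(1-\theta)(cm)^\theta\bigl(1+o(m^{-\theta/2})\bigr),
\]
hence $\E R_{[nt]}/\E R_n=t^\theta\bigl(1+o(n^{-\theta/2})\bigr)$ uniformly on $[\delta,1]$. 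Using $\int_0^1 dA(t)=0$ and $\int_0^1\log t\,dA(t)=1$,
\[
\int_0^1\log\frac{\E R_{[nt]}}{\E R_n}\,dA(t)-\theta=\int_\delta^1\Bigl(\log\frac{\E R_{[nt]}}{\E R_n}-\theta\log t\Bigr)dA(t)=o(n^{-\theta/2}),
\]
and since $\sqrt{\E R_n}$ is of order $n^{\theta/2}$, multiplying by $\sqrt{\E R_n}$ sends this to $0$. Establishing this second-order bound is precisely where the hypothesis $p_i=ci^{-1/\theta}(1+o(i^{-1/2}))$ is indispensable, and I expect it to be the main obstacle; the remaining steps are comparatively soft.

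For the random term, write $R_{[nt]}/\E R_{[nt]}=1+\bigl(\sqrt{\E R_n}/\E R_{[nt]}\bigr)Z_n(t)$. By Theorem~1 (which entails tightness), $\sup_t|Z_n(t)|=O_p(1)$, and since $\E R_{[nt]}\ge c_0\,\E R_n$ on $[\delta,1]$ for some $c_0>0$ (from the uniform convergence $\E R_{[nt]}/\E R_n\to t^\theta\ge\delta^\theta$), we get $\sup_{t\in[\delta,1]}\bigl(\sqrt{\E R_n}/\E R_{[nt]}\bigr)|Z_n(t)|=O_p\bigl(1/\sqrt{\E R_n}\bigr)\to_p 0$. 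On the event where this supremum is at most $1/2$, whose probability tends to $1$, the bound $|\log(1+x)-x|\le x^2$ gives
\[
\sqrt{\E R_n}\,\Bigl|\int_\delta^1\log\frac{R_{[nt]}}{\E R_{[nt]}}\,dA(t)-\int_\delta^1\frac{\sqrt{\E R_n}}{\E R_{[nt]}}\,Z_n(t)\,dA(t)\Bigr|\le\frac{1}{c_0^2\sqrt{\E R_n}}\,\sup_t Z_n^2(t)\int_\delta^1|dA(t)|\to_p 0.
\]
Finally, $\sqrt{\E R_n}\int_\delta^1\bigl(\sqrt{\E R_n}/\E R_{[nt]}\bigr)Z_n(t)\,dA(t)=\int_\delta^1\bigl(\E R_n/\E R_{[nt]}\bigr)Z_n(t)\,dA(t)$, and $\E R_n/\E R_{[nt]}\to t^{-\theta}$ uniformly on $[\delta,1]$ by the expansion above, so this quantity differs from $\int_0^1 t^{-\theta}Z_n(t)\,dA(t)$ by at most $\sup_{t\in[\delta,1]}\bigl|\E R_n/\E R_{[nt]}-t^{-\theta}\bigr|\cdot\sup_t|Z_n(t)|\cdot\int_\delta^1|dA(t)|\to_p 0$. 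Summing the three contributions in the decomposition proves the displayed claim for $\theta_n$, and averaging it with the identical statement for $\theta'_n$ completes the proof.
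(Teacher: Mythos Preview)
Your argument is correct and is, in outline, precisely the proof behind the reference the paper cites: the paper's own proof of this theorem is a one-line appeal to the definition of $\widehat\theta$ together with Theorem~2 of Chebunin and Kovalevskii (2019), and that cited result is established by exactly the logarithmic decomposition $\log R_{[nt]}=\log\E R_n+\log(\E R_{[nt]}/\E R_n)+\log(R_{[nt]}/\E R_{[nt]})$, the second-order expansion of $\E R_m$ under the hypothesis $p_i=ci^{-1/\theta}(1+o(i^{-1/2}))$, and the linearization of $\log(1+x)$ against the tight process $Z_n$ that you carry out. Your reduction to separate statements for $\theta_n$ and $\theta'_n$ is the only extra ingredient beyond the cited theorem, and it is immediate from symmetry and the identity $\E R'_{[nt]}=\E R_{[nt]}$.
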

\begin{proof}
The proof follows from the definition of $\widehat{\theta}$ and Theorem 2 from Chebunin and Kovalevskii (2019).
\end{proof}

From Theorem 3, it follows that  $\widehat{\theta}$ converges to $\theta$ with rate $({\bf E}R_n)^{-1/2}$, and 
$\sqrt{{\bf E}R_n} (\widehat{\theta}-\theta)$ converges weakly to the normal random variable 
$ \frac12 \int_0^1  t^{-\theta}( Z_{\theta}(t)+ Z'_{\theta}(t)) \, dA(t)$ with variance
$ \frac12 \int_0^1 \int_0^1 (st)^{-\theta} (K(s,t)+k(s,t)) \, dA(s) \, dA(t)$.

{\bf Example 1} Take
\[
A(t) = \left\{
\begin{array}{ll}
0, & 0 \le t \le 1/2; \\
-(\log 2)^{-1}, & 1/2<t<1; \\
0, & t=1.
\end{array}
\right.
\]

Then
\[
\widehat{\theta} = \log_2 \left(R_n/\sqrt{R_{[n/2]} R'_{[n/2]}}\right), \ \ n  \ge 2.
\]

\section{Test for a known rate}

Let $0<\theta<1$ be known. We introduce  {\em empirical bridges} $\overset{o}{Z}_n$,  $\overset{o}{Z'}_n$  (Kovalevskii and Shatalin, 2015, 2016) as follows.
\[
\overset{o}{Z}_n(k/n)=\left(R_k-(k/n)^{\theta}R_n\right)/\sqrt{R_n}, \ \ \overset{o}{Z'}_n(k/n)=\left(R'_k-(k/n)^{\theta}R_n\right)/\sqrt{R_n},
\]
$0\leq k \leq n$, where $R_0=0$. We construct a piecewise linear approximation: 
for any $0\leq u < 1/n$ and $0\leq k \leq n-1$,
\[
\overset{o}{Z}_n\left(\frac{k}{n}+u\right)=\overset{o}{Z}_n(k/n)+nu\left(\overset{o}{Z}_n((k+1)/n) - \overset{o}{Z}_n(k/n)\right),
\] 
\[
\overset{o}{Z'}_n\left(\frac{k}{n}+u\right)=\overset{o}{Z'}_n(k/n)+nu\left(\overset{o}{Z'}_n((k+1)/n) - \overset{o}{Z'}_n(k/n)\right).
\] 

\begin{theorem}
Under the assumptions of Theorem 2,
\[
\sup_{0\le t \le 1} |\overset{o}{Z}_n(t)-(Z_n(t)-t^{\theta}Z_n(1))| \to 0 \ \mbox{a.s.}
\]
\[
\sup_{0\le t \le 1} |\overset{o}{Z'}_n(t)-(Z'_n(t)-t^{\theta}Z'_n(1))| \to 0 \ \mbox{a.s.}
\]
\end{theorem}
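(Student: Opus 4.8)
The plan is to reduce the statement about the empirical bridge $\overset{o}{Z}_n$ to a deterministic approximation argument, using the already-established strong consistency $R_n/\E R_n \to 1$ a.s. and the known asymptotics $\E R_n \sim \Gamma(1-\theta)\alpha(n)$, $\E R_{[nt]} \sim t^\theta \E R_n$. Writing out the definitions,
\[
\overset{o}{Z}_n(k/n) - \bigl(Z_n(k/n) - (k/n)^\theta Z_n(1)\bigr)
= \frac{R_k - (k/n)^\theta R_n}{\sqrt{R_n}} - \frac{R_k - \E R_k}{\sqrt{\E R_n}} + (k/n)^\theta\frac{R_n - \E R_n}{\sqrt{\E R_n}},
\]
and after collecting terms this difference is a sum of pieces each of which carries either a factor $\sqrt{R_n/\E R_n} - 1 \to 0$ or a discrepancy between $(k/n)^\theta \E R_n$ and $\E R_{[nt]}$, divided by $\sqrt{\E R_n}$. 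So the first step is to expand the difference algebraically and identify these two sources of error.

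Second, I would control the factor $\sqrt{R_n/\E R_n}-1$: by Karlin's SLLN this tends to $0$ a.s., and since $Z_n(t) = (R_{[nt]} - \E R_{[nt]})/\sqrt{\E R_n}$ is, after the FCLT of Chebunin–Kovalevskii (2016), a.s. uniformly bounded on $[0,1]$ along the convergent subsequence realization (or simply $O_P(1)$ uniformly, which suffices for convergence in probability; but to get the a.s. statement one uses that a weakly convergent sequence in $D(0,1)$ with a.s.-continuous limit is tight, hence $\sup_t|Z_n(t)|$ is a.s. bounded), the product with $\sqrt{R_n/\E R_n}-1$ goes to $0$ uniformly a.s. The cleanest route is to observe $\overset{o}{Z}_n(t)$ itself equals $\sqrt{R_n/\E R_n}\,\bigl(Z_n(t) + (\E R_{[nt]} - (t)^\theta \E R_n)/\sqrt{\E R_n}\bigr) - (t)^\theta\sqrt{R_n/\E R_n}\,Z_n(1) \cdot(\text{corrections})$, so everything reduces to the two bracketed deterministic-error terms times a factor converging to $1$.

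Third — and this is the technical heart — I would bound $\sup_{0\le t\le 1}\bigl|\E R_{[nt]} - t^\theta\,\E R_n\bigr|/\sqrt{\E R_n}$, showing it tends to $0$. Here the hypothesis $p_i = c\,i^{-1/\theta}(1+o(i^{-1/2}))$ from Theorem 2 is exactly what is needed: it pins down $\alpha(x)$ up to an error that makes $\E R_m = \Gamma(1-\theta)\alpha(m)(1+o(\alpha(m)^{-1/2}))$ precise enough that $\E R_{[nt]} - t^\theta \E R_n = o(\sqrt{\E R_n})$ uniformly in $t$. For small $t$ (near $0$) one needs a separate estimate since $\E R_{[nt]}$ and $t^\theta \E R_n$ are both small; but there $\E R_{[nt]} \le R$-bound $\le C (nt)^\theta$ and $t^\theta \E R_n \le C t^\theta n^\theta$, both $o(\sqrt{n^\theta})$ uniformly once $nt$ is bounded, and for $nt \to \infty$ the regular-variation asymptotics apply. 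I expect the main obstacle to be making this uniformity in $t$ — particularly the transition region $nt$ moderate — fully rigorous rather than just for fixed $t$; this is a Potter-bounds / uniform-convergence-of-regularly-varying-functions argument. Finally, the piecewise-linear interpolation contributes only an $O(1/n)$-scale oscillation which is dominated, so passing from the grid values $k/n$ to all $t\in[0,1]$ is routine. The argument for $\overset{o}{Z'}_n$ is identical after replacing $R_k$ by $R'_k$ and using that $\E R'_{[nt]} = \E R_{[nt]}$ and that the backward FCLT (\ref{a11}) holds with the same limiting covariance, so $\sup_t|Z'_n(t)|$ is a.s. bounded as well.
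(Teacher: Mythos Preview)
The paper's proof takes a much lighter route than yours: for the forward statement it simply cites Theorem~3 of Chebunin--Kovalevskii (2019), and for the backward statement it verifies only that the piecewise-linear interpolation introduces a negligible error, via the sandwich
\[
\frac{R'_k - ((k+1)/n)^\theta R_n}{\sqrt{R_n}} \;\le\; \overset{o}{Z'}_n(t) \;\le\; \frac{R'_{k+1} - (k/n)^\theta R_n}{\sqrt{R_n}}, \qquad k=[nt],
\]
which yields the uniform bound $1/\sqrt{R_n}+\sqrt{R_n}/n^\theta\to 0$ a.s.\ from $R_n/\E R_n\to 1$ and $\E R_n\asymp n^\theta L(n)$. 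The remaining passage from $(R'_{[nt]}-t^\theta R_n)/\sqrt{R_n}$ to $Z'_n(t)-t^\theta Z'_n(1)$ is left implicit: since $(R'_k)_{k\le n}\stackrel{d}{=}(R_k)_{k\le n}$, whatever argument establishes the forward statement in the 2019 paper establishes the backward one too.

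Your self-contained decomposition is a reasonable alternative program, but two points need attention. First, the rate hypothesis $p_i=c\,i^{-1/\theta}(1+o(i^{-1/2}))$ that you invoke is the assumption of Theorem~3, not Theorem~2; Theorem~4 is stated only under the regular-variation assumption of Theorem~2. Under bare regular variation the bias $\E R_{[nt]}-t^\theta\E R_n$ is \emph{not} in general $o(\sqrt{\E R_n})$ uniformly in $t$: with $\alpha(x)=x^\theta\log x$ one gets $\E R_{[nt]}-t^\theta\E R_n\sim \Gamma(1-\theta)\,t^\theta n^\theta\log t$, whose ratio to $\sqrt{\E R_n}\asymp n^{\theta/2}\sqrt{\log n}$ diverges. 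So your route as written uses strictly more than the stated hypotheses. Second, your claim that $\sup_t|Z_n(t)|$ is ``a.s.\ bounded'' because the sequence is tight in $D(0,1)$ is not correct: tightness (or weak convergence) only gives $\sup_t|Z_n(t)|=O_P(1)$, which would yield convergence in probability, not the almost-sure conclusion of the theorem. To recover a.s.\ convergence along your lines you would need a genuine almost-sure maximal estimate on $R_k-\E R_k$, or else do what the paper does and offload that part to the cited 2019 result, handling the backward case by distributional symmetry plus the elementary interpolation bound above.
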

\begin{proof}

The first statement is Theorem 3 from the Ch.K(2019). 
For the second statement let $t\in [0,1)$, and $k=[nt]$, then $t=k/n+u$, $0 \le k \le n-1$, $u\in[0,1/n)$.
Let $f_{\theta}(x)=(1+x)^{\theta}-x^{\theta}$. So $0\le f_{\theta}(x)\le f_{\theta}(0)=1$ for $x\ge 0$.

By the definition of $\overset{o}{Z'}_n(t)$,
\[
\frac{R'_k - \left(\frac{k+1}{n}\right)^{\theta} R_n}{\sqrt{R_n}} \le \overset{o}{Z'}_n(t) \le
\frac{R'_{k+1} - \left(\frac{k}{n}\right)^{\theta} R_n}{\sqrt{R_n}},
\]
so
\[
\left| \overset{o}{Z'}_n(t) - \frac{R_{[nt]} - t^{\theta} R_n}{\sqrt{R_n}}\right|
\le 
\frac{R'_{k+1}-R'_k + \frac{1}{n^{\theta}} f_{\theta}(k) R_n}{\sqrt{R_n}}
\]
\[
\le \frac{1}{\sqrt{R_n}} + \frac{\sqrt{R_n}}{n^{\theta}} \to 0 
\]
a.s. unformly on $t \in [0,1]$.

\end{proof}

Let C(0,1) be the set of all continious functions on $[0,1]$ with the uniform metric
$\rho(x,y)=\max_{t\in[0,1]}|x(t)-y(t)|$.
By the Theorem 1, we have

\begin{corollary}
Under the assumptions of Theorem 4,
$(\overset{o}{Z}_n,\overset{o}{Z'}_n)$ converges weakly in $C(0,1)$ to 2-dimensional Gaussian process $(\overset{o}{Z},\overset{o}{Z'})$ 
that can be represented as $(\overset{o}{Z}(t),\overset{o}{Z'}(t))=(Z_\theta(t) - t^{\theta} Z_\theta(1), Z_\theta'(t) - t^{\theta} Z_\theta'(1))$, $0 \leq t \leq 1$. 
Its correlation function is given by
covariance function 
\[
c_{R,R}(s,t)=c_{R',R'}(s,t)=\overset{o}{K}(s,t), \ \ c_{R,R'}(s,t)=\overset{o}{K'}(s,t), 
\]
where
\[
\overset{o}{K}(s,t)=K(s,t)-s^{\theta}K(1,t)-t^{\theta}K(s,1)+s^{\theta}t^{\theta}K(1,1),
\]
\[
\overset{o}{K'}(s,t)=K'(s,t)-s^{\theta}K'(1,t)-t^{\theta}K'(s,1)+s^{\theta}t^{\theta}K'(1,1).
\]

\end{corollary}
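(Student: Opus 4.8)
The plan is to derive the corollary directly from Theorem 4 and Theorem 1 by a continuous-mapping argument, together with the explicit covariance computation. First I would note that Theorem 4 shows that the piecewise-linear empirical bridges $\overset{o}{Z}_n$ and $\overset{o}{Z'}_n$ are, uniformly on $[0,1]$ and almost surely, within $o(1)$ of the processes $Z_n(t)-t^{\theta}Z_n(1)$ and $Z'_n(t)-t^{\theta}Z'_n(1)$ respectively. Hence $(\overset{o}{Z}_n,\overset{o}{Z'}_n)$ has the same weak limit in $C(0,1)^2$ (equivalently in $D(0,1)^2$ with the uniform metric, since the limit has continuous paths) as $\bigl(Z_n(t)-t^{\theta}Z_n(1),\,Z'_n(t)-t^{\theta}Z'_n(1)\bigr)$.

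Next I would apply the continuous mapping theorem to the map $\Phi\colon D(0,1)^2\to C(0,1)^2$ defined by $\Phi(x,y)(t)=\bigl(x(t)-t^{\theta}x(1),\,y(t)-t^{\theta}y(1)\bigr)$, which is continuous in the uniform metric. By Theorem 1, $(Z_n,Z'_n)\Rightarrow(Z_\theta,Z'_\theta)$, so $\Phi(Z_n,Z'_n)\Rightarrow\Phi(Z_\theta,Z'_\theta)=\bigl(Z_\theta(t)-t^{\theta}Z_\theta(1),\,Z'_\theta(t)-t^{\theta}Z'_\theta(1)\bigr)$. Combining with the previous paragraph gives the stated weak convergence of $(\overset{o}{Z}_n,\overset{o}{Z'}_n)$ and the claimed representation of the limit. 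Since $(Z_\theta,Z'_\theta)$ is a centered Gaussian process, any finite linear combination of its coordinate evaluations, hence of $\overset{o}{Z}$ and $\overset{o}{Z'}$, is Gaussian, so the limit is a 2-dimensional Gaussian process with zero mean.

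It remains to compute the covariance function. Writing $\overset{o}{Z}(s)=Z_\theta(s)-s^{\theta}Z_\theta(1)$ and bilinearly expanding ${\bf E}\,\overset{o}{Z}(s)\overset{o}{Z}(t)$, I would use ${\bf E}\,Z_\theta(s)Z_\theta(t)=K(s,t)$ from Theorem 1 to get $\overset{o}{K}(s,t)=K(s,t)-s^{\theta}K(1,t)-t^{\theta}K(s,1)+s^{\theta}t^{\theta}K(1,1)$; the analogous expansion of ${\bf E}\,\overset{o}{Z}(s)\overset{o}{Z'}(t)$ using ${\bf E}\,Z_\theta(s)Z'_\theta(t)=K'(s,t)$ yields $\overset{o}{K'}(s,t)=K'(s,t)-s^{\theta}K'(1,t)-t^{\theta}K'(s,1)+s^{\theta}t^{\theta}K'(1,1)$, and symmetry gives $c_{R',R'}=\overset{o}{K}$ as well. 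This is a routine bilinear computation.

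The only real subtlety is the passage from almost-sure uniform approximation to a statement about weak limits: one should observe that if $A_n\Rightarrow A$ and $\rho(A_n,B_n)\to 0$ in probability (here even a.s.), then $B_n\Rightarrow A$ (Slutsky-type lemma for metric-space-valued random elements), and that weak convergence in $D(0,1)^2$ with uniform metric to a process with a.s. continuous paths is equivalent to weak convergence in $C(0,1)^2$. Neither of these is hard, but they are the steps that need to be stated carefully; everything else is either quoted from Theorems 1 and 4 or is elementary algebra.
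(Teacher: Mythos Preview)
Your proposal is correct and matches the paper's own treatment: the paper gives no explicit proof of this corollary beyond the words ``By the Theorem~1, we have'', so the intended argument is precisely the combination of Theorem~4 (uniform a.s.\ approximation), Theorem~1 (joint FCLT for $(Z_n,Z'_n)$), the continuous mapping/Slutsky step, and a bilinear covariance computation that you spell out. One tiny slip: the codomain of your map $\Phi$ should be $D(0,1)^2$ rather than $C(0,1)^2$, but you already handle the passage to $C(0,1)^2$ correctly via continuity of the limit's paths, so this does not affect the argument.
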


Now we show how to implement the goodness-of-fit test
in this case.

Let $W_n^2=\int\limits_0^1 \left(\overset{o}{Z}_n(t)\right)^2+\left(\overset{o}{Z'}_n(t)\right)^2 dt$. It is equal to 
\begin{equation}\label{wn2}
W_n^2=\frac{1}{3n}\sum_{k=1}^{n-1} \overset{o}{Z}_n\left(\frac{k}{n}\right)\left( 2\overset{o}{Z}_n\left(\frac{k}{n}\right)+\overset{o}{Z}_n\left(\frac{k+1}{n}\right) \right)
\end{equation}
\[
+\frac{1}{3n}\sum_{k=1}^{n-1}\overset{o}{Z'}_n\left(\frac{k}{n}\right)\left( 2\overset{o}{Z'}_n\left(\frac{k}{n}\right)+\overset{o}{Z'}_n\left(\frac{k+1}{n}\right) \right).
\]

Then $W_n^2$ converges weakly to $W_{\theta}^2= \int\limits_0^1 \left(\overset{o}{Z}_{\theta}(t)\right)^2 + \left(\overset{o}{Z'}_{\theta}(t)\right)^2 dt$.

So the test rejects the basic hypothesis if $W_n^2 \geq C$. The p-value of the test is $1-F_{\theta}(W_{n,obs}^2)$.
Here $F_{\theta}$ is the cumulative distribution function of $W_{\theta}^2$ and $W_{n,obs}^2$ is a concrete value of $W_{n}^2$
 for observations under consideration.

One can estimate $F_{\theta}$ by simulations or find it explicitely using the  Smirnov's formula (Smirnov, 1937): 
if $W_{\theta}^2=\sum_{k=1}^{\infty}\frac{\eta^2_k}{\lambda_k}$,  
$\eta_1,\eta_2,\ldots$ are independent and have standard normal distribution, $0<\lambda_1<\lambda_2<\ldots$, then 
\begin{equation}\label{cdf}
F_{\theta}(x)=1+\frac{1}{\pi} \sum_{k=1}^{\infty} (-1)^k \int_{\lambda_{2k-1}}^{\lambda_{2k}}\frac{e^{-\lambda x/2}}{\sqrt{-D(\lambda)}}
\cdot \frac{d\lambda}{\lambda}, \ x>0,
\end{equation}
\[
D(\lambda)=\prod_{k=1}^{\infty} \left( 1- \frac{\lambda}{\lambda_k}\right).
\]

The integrals in the RHS of (\ref{cdf}) must tend to 0 monotonically as $k \to \infty$, 
and $\lambda_k^{-1}$ are the eigenvalues of kernel 
(see Martynov (1973), Chapter 3).

\section{Test for an  unknown rate}

Let us introduce the process $(\widehat{Z}_n,\widehat{Z'}_n)$:
\[
\widehat{Z}_n(k/n)=\left(R_k-(k/n)^{\widehat{\theta}}R_n\right)/\sqrt{R_n}, \ \ 
\widehat{Z'}_n(k/n)=\left(R'_k-(k/n)^{\widehat{\theta}}R_n\right)/\sqrt{R_n},
\]
$0\leq k \leq n$.
As for $\overset{o}{Z}_n$, let
for $0\leq u < 1/n$ and $0\leq k \leq n-1$
\[
\widehat{Z}_n\left(\frac{k}{n}+u\right)=\widehat{Z}_n(k/n)+nu\left(\widehat{Z}_n((k+1)/n) - \widehat{Z}_n(k/n)\right),
\] 
\[
\widehat{Z'}_n\left(\frac{k}{n}+u\right)=\widehat{Z'}_n(k/n)+nu\left(\widehat{Z'}_n((k+1)/n) - \widehat{Z'}_n(k/n)\right).
\]

\begin{theorem}
Under assumptions of Theorem 3, 
$(\widehat{Z}_n,\widehat{Z'}_n)$ converges weakly as $n \to \infty$ to 2-dimensional Gaussian process $(\widehat{Z}_{\theta},\widehat{Z'}_{\theta})$ 
that can be represented as $(\widehat{Z}_{\theta}(t),\widehat{Z'}_{\theta}(t))$, $0 \leq t \leq 1$, where
\[
\widehat{Z}_{\theta}(t) =\overset{o}{Z}(t) - \frac{t^{\theta}\log  t}2 
\int_0^1 u^{-\theta} (Z_{\theta}(u)+Z_{\theta}'(u))\,dA(u),
\]
\[
\widehat{Z'}_{\theta}(t) =\overset{o}{Z'}(t) - \frac{t^{\theta}\log  t}2 
\int_0^1 u^{-\theta} (Z_{\theta}(u)+Z_{\theta}'(u))\,dA(u).
\]
\end{theorem}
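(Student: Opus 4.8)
The plan is to express $\widehat{Z}_n,\widehat{Z'}_n$ as small perturbations of the empirical bridges $\overset{o}{Z}_n,\overset{o}{Z'}_n$ of Section~4 and then read off the limit from Theorems~1,~3 and~4 via the continuous mapping theorem and Slutsky's lemma. Put $\varepsilon_n=\widehat{\theta}-\theta$. At the knots $t=k/n$, $0\le k\le n$, the definitions give at once $\widehat{Z}_n(k/n)-\overset{o}{Z}_n(k/n)=\bigl((k/n)^{\theta}-(k/n)^{\widehat{\theta}}\bigr)\sqrt{R_n}$, and likewise with primes; since $\widehat{Z}_n$ and $\overset{o}{Z}_n$ are the piecewise-linear interpolants of their knot values, $\widehat{Z}_n-\overset{o}{Z}_n$ is the piecewise-linear interpolant of $s\mapsto g_n(s)\sqrt{R_n}$ with $g_n(s)=s^{\theta}-s^{\widehat{\theta}}$. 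First I would bound the distance between this interpolant and $g_n(t)\sqrt{R_n}$: on $[k/n,(k+1)/n]$ with $k\ge1$ it is $O\bigl(n^{-2}\sqrt{R_n}\sup_{[k/n,1]}|g_n''|\bigr)$, on $[0,1/n]$ it is $O\bigl(\sqrt{R_n}\,n^{-\min(\theta,\widehat{\theta})}\bigr)$, and since $|g_n''(s)|\le C_\theta s^{\min(\theta,\widehat{\theta})-2}$ on $(0,1]$ this yields a uniform-in-$t$ bound of order $\sqrt{R_n}\,n^{-\min(\theta,\widehat{\theta})}$. By Karlin's SLLN (\ref{a6}) and (\ref{a8}), $\sqrt{R_n}=O\bigl(n^{\theta/2}L(n)^{1/2}\bigr)$ a.s., whereas $\widehat{\theta}\to\theta>0$ a.s.\ by Theorem~2, so $\min(\theta,\widehat{\theta})>\tfrac34\theta$ for large $n$; hence this interpolation error tends to $0$ a.s., uniformly in $t$.

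Next I would linearise the exponent. Writing $t^{\widehat{\theta}}-t^{\theta}=t^{\theta}(e^{\varepsilon_n\log t}-1)$ and using $|e^x-1-x|\le\tfrac12 x^2 e^{|x|}$ with $x=\varepsilon_n\log t$, one gets, for $t\in(0,1]$,
\[
\bigl|\,t^{\widehat{\theta}}-t^{\theta}-t^{\theta}\varepsilon_n\log t\,\bigr|\le\tfrac12\varepsilon_n^2\,t^{\theta-|\varepsilon_n|}(\log t)^2 ,
\]
and all terms vanish at $t=0$. Once $|\varepsilon_n|<\theta/2$ --- which happens eventually, a.s., by Theorem~2 --- the right-hand side is $\le C_\theta\varepsilon_n^2$ uniformly in $t$, because $\sup_{0<t\le1}t^{\theta/2}(\log t)^2<\infty$. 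Hence
\[
\sup_{0\le t\le1}\bigl|(t^{\theta}-t^{\widehat{\theta}})\sqrt{R_n}+t^{\theta}\log t\cdot\varepsilon_n\sqrt{R_n}\bigr|\le C_\theta\,\varepsilon_n^2\sqrt{R_n}.
\]
Now $\varepsilon_n^2\sqrt{R_n}=\varepsilon_n\cdot\bigl(\varepsilon_n\sqrt{R_n}\bigr)$ with $\varepsilon_n\to0$ a.s.\ (Theorem~2) and $\varepsilon_n\sqrt{R_n}=\sqrt{{\bf E}R_n}\,\varepsilon_n\cdot\sqrt{R_n/{\bf E}R_n}$ bounded in probability, since $\sqrt{{\bf E}R_n}(\widehat{\theta}-\theta)$ converges in distribution by Theorem~3 and $R_n/{\bf E}R_n\to1$ a.s.; thus $\varepsilon_n^2\sqrt{R_n}\to_p0$. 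Together with the interpolation bound of Step~1 this gives
\[
\sup_{0\le t\le1}\bigl|\widehat{Z}_n(t)-\bigl(\overset{o}{Z}_n(t)-t^{\theta}\log t\cdot\varepsilon_n\sqrt{R_n}\bigr)\bigr|\to_p0,
\]
and the same for the primed process, with the \emph{same} scalar $\varepsilon_n\sqrt{R_n}$.

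It remains to pass to the limit. Decompose $\varepsilon_n\sqrt{R_n}=\bigl(\sqrt{{\bf E}R_n}(\widehat{\theta}-\theta)\bigr)\sqrt{R_n/{\bf E}R_n}$; by Theorem~3, $\sqrt{{\bf E}R_n}(\widehat{\theta}-\theta)=\tfrac12\int_0^1u^{-\theta}(Z_n(u)+Z'_n(u))\,dA(u)+o_p(1)$, and since $A$ has bounded variation and is constant on $[0,\delta]$, the map $x\mapsto\int_0^1u^{-\theta}x(u)\,dA(u)$ is a bounded linear functional on $D(0,1)$ for the uniform metric. By Theorem~4 (and Corollary~1), $\overset{o}{Z}_n$ and $\overset{o}{Z'}_n$ differ by $o(1)$ a.s.\ from the uniformly continuous functionals $Z_n(\cdot)-(\cdot)^{\theta}Z_n(1)$ and $Z'_n(\cdot)-(\cdot)^{\theta}Z'_n(1)$ of $(Z_n,Z'_n)$. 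As $(Z_n,Z'_n)$ converges weakly in $D(0,1)^2$ to the a.s.\ continuous Gaussian process $(Z_\theta,Z'_\theta)$ by Theorem~1, the continuous mapping theorem, Slutsky's lemma and $R_n/{\bf E}R_n\to1$ a.s.\ give the joint weak convergence of $\bigl(\overset{o}{Z}_n,\overset{o}{Z'}_n,\varepsilon_n\sqrt{R_n}\bigr)$ to $\bigl(\overset{o}{Z},\overset{o}{Z'},\tfrac12\int_0^1u^{-\theta}(Z_\theta(u)+Z'_\theta(u))\,dA(u)\bigr)$; substituting this in the approximation above and applying the continuous map $(x,y,c)\mapsto\bigl(x(\cdot)-(\cdot)^{\theta}\log(\cdot)\,c,\ y(\cdot)-(\cdot)^{\theta}\log(\cdot)\,c\bigr)$ reproduces exactly $(\widehat{Z}_\theta,\widehat{Z'}_\theta)$, which is Gaussian as a continuous linear image of $(Z_\theta,Z'_\theta)$.

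The hard part is uniformity over all of $[0,1]$, in particular near $t=0$: the second derivative $g_n''$ has a genuine singularity $s^{\min(\theta,\widehat{\theta})-2}$, and the Taylor remainder carries the weights $t^{-|\varepsilon_n|}(\log t)^2$; both are tamed only by exploiting the a.s.\ confinement $\widehat{\theta}\to\theta\in(0,1)$ (keeping the exponents away from the bad values) and by playing off the $o(1)$ factor $\widehat{\theta}-\theta$ against the $O_p(1)$ factor $\sqrt{{\bf E}R_n}(\widehat{\theta}-\theta)$ to annihilate $(\widehat{\theta}-\theta)^2\sqrt{R_n}$.
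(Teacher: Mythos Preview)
Your proof is correct and follows essentially the same route as the paper: you establish $\sup_t\bigl|\widehat{Z}_n(t)-\bigl(\overset{o}{Z}_n(t)-t^{\theta}\log t\cdot(\widehat{\theta}-\theta)\sqrt{R_n}\bigr)\bigr|\to_p 0$ (and its primed twin), then invoke Theorems~1, 3 and~4 together with the continuous mapping theorem, exactly as the paper does. The only cosmetic differences are that the paper bounds the interpolation error via the one-step increment $f_{\theta}(k)=(k+1)^{\theta}-k^{\theta}$ rather than via $g_n''$, and handles the Taylor remainder of $t^{\widehat{\theta}}-t^{\theta}$ by summing the full exponential series rather than using the Lagrange form $|e^x-1-x|\le\tfrac12 x^2e^{|x|}$; both pairs of arguments produce the same uniform $O(\sqrt{R_n}\,n^{-\min(\theta,\widehat{\theta})})$ and $O((\widehat{\theta}-\theta)^2)$ bounds, respectively.
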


\begin{proof}
Similarly to the proof of Theorem 4 from the Ch.K(2019), we can show that 
\[
\sup_{t \in [0,1]} 
\left|\widehat{Z}_n(t)-(\overset{o}{Z}_n(t)- \sqrt{R_n}(\widehat{\theta}-\theta)t^{\theta}\log  t)\right| \to_{p} 0,
\]
\[
\sup_{t \in [0,1]} 
\left|\widehat{Z'}_n(t)-(\overset{o}{Z'}_n(t)- \sqrt{R_n}(\widehat{\theta}-\theta)t^{\theta}\log  t)\right| \to_{p} 0.
\]
Let's do it for $\widehat{Z'}_n(t)$. Let $t\in [0,1)$, $k=[nt]$, $u=t-k/n$, $f_{\theta}(x)=(1+x)^{\theta}-x^{\theta}$ as in the proof of Theorem 4. By the definition,
\[
\widehat{Z'}_n(k/n)=\overset{o}{Z'}_n(k/n)+\sqrt{R_n}\left((k/n)^{\theta}-(k/n)^{\widehat{\theta}}\right),
\] 
\[
\widehat{Z'}_n(t)=\overset{o}{Z'}_n(t)+\sqrt{R_n}\left((k/n)^{\theta}-(k/n)^{\widehat{\theta}}\right)
\]
\[
+nu \sqrt{R_n} \left(\left(\frac{k+1}{n}\right)^{\theta}
- \left(\frac{k+1}{n}\right)^{\widehat{\theta}}-\left(\frac{k}{n}\right)^{\theta}+\left(\frac{k}{n}\right)^{\widehat{\theta}}
\right).
\] 

We have 
\[
\left(\frac{k+1}{n}\right)^{\theta}-\left(\frac{k}{n}\right)^{\theta}=f_{\theta}(k)/n^{\theta}, \ \ 
\left(\frac{k+1}{n}\right)^{\widehat{\theta}}-\left(\frac{k}{n}\right)^{\widehat{\theta}}
=f_{\widehat{\theta}}(k)/n^{\widehat{\theta}},
\]
so
\[
\left|\widehat{Z'}_n(t)-\overset{o}{Z'}_n(t)+\sqrt{R_n}\left(t^{\widehat{\theta}}-t^{{\theta}}\right)\right|
\]
\[
=\left|\widehat{Z'}_n(t)-\overset{o}{Z'}_n(t)+\sqrt{R_n}\left(\left(\frac{k}{n}+u\right)^{\widehat{\theta}}
-\left(\frac{k}{n}+u\right)^{{\theta}}\right)\right|
\]
\[
\le 2\sqrt{R_n}\left(f_{\theta}(k)/n^{\theta}+f_{\widehat{\theta}}(k)/n^{\widehat{\theta}}\right)
\le
2\sqrt{R_n}\left(1/n^{\theta}+1/n^{\widehat{\theta}}\right)\to 0
\]
a.s. unformly on $t \in [0,1]$. 

Note that one can change $t^{\widehat{\theta}}-t^{\theta}$ by $(\widehat{\theta}-\theta)t^{\theta}\log  t $.
Really,
\[
t^{\widehat{\theta}}-t^{\theta} = t^{\theta} \left( e^{(\widehat{\theta}-\theta)\log  t} - 1\right)
\]
\[
=(\widehat{\theta}-\theta)t^{\theta}\log  t + t^{\theta}\sum_{k \ge 2} \frac{((\widehat{\theta}-\theta)\log  t)^k}{k!}
\]
\[
=(\widehat{\theta}-\theta)t^{\theta}\log  t + t^{\theta}(\widehat{\theta}-\theta)^2 (1+o(1))
\sum_{k \ge 2} \frac{\log^k  t}{k!}
\]
\[
=(\widehat{\theta}-\theta)t^{\theta}\log  t \left( 1+ (\widehat{\theta}-\theta)(1+o(1))\frac{e^{\log t} - 1 - \log t}{\log t} \right)
\]
\[
=(\widehat{\theta}-\theta)t^{\theta}\log  t ( 1+ o(1))
\]
a.s. unformly on $t \in [0,1]$. 
Hence from Theorems 3 and 4, we have joint weak convergence  of 
\[
(\overset{o}{Z}_n,\overset{o}{Z'}_n, \sqrt{R_n}(\widehat{\theta}-\theta))
\]
to 
\[
(\overset{o}{Z}_{\theta},\overset{o}{Z'}_\theta, \ \frac12 \int_0^1  u^{-\theta}( Z_{\theta}(u)+ Z'_{\theta}(u)) \, dA(u)).
\]
 So, $(\widehat{Z}_n,\widehat{Z'}_n)$ converges weakly to $(\widehat{Z}_{\theta},\widehat{Z'}_{\theta})$.

\end{proof}

\begin{corollary}  Assume the conditions of Theorem 2 to hold. 
Let $\widehat{W}_n^2=\int\limits_0^1 \left(\widehat{Z}_n(t)\right)^2+\left(\widehat{Z'}_n(t)\right)^2 dt$.
Then $\widehat{W}_n^2$ converges weakly to $\widehat{W}_{\theta}^2= \int\limits_0^1 \left(\widehat{Z}_{\theta}(t)\right)^2 +\left(\widehat{Z'}_{\theta}(t)\right)^2dt$.
\end{corollary}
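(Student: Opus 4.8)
The plan is to obtain this as an immediate consequence of Theorem~5 via the continuous mapping theorem. First I would record the precise mode of convergence supplied by Theorem~5: since the paths of $\widehat{Z}_n$ and $\widehat{Z'}_n$ are piecewise linear, hence continuous on $[0,1]$, and the limit $(\widehat{Z}_\theta,\widehat{Z'}_\theta)$ has continuous sample paths a.s.\ (being built from the continuous Gaussian processes $Z_\theta,Z'_\theta$ as in the statement of Theorem~5), the weak convergence $(\widehat{Z}_n,\widehat{Z'}_n)\Rightarrow(\widehat{Z}_\theta,\widehat{Z'}_\theta)$ holds in $C(0,1)^2$ equipped with the uniform metric $\rho$. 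In particular $\widehat{W}_\theta^2=\int_0^1(\widehat{Z}_\theta(t)^2+\widehat{Z'}_\theta(t)^2)\,dt$ is a well-defined, a.s.\ finite random variable. This is the only input about the processes that the proof uses.

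Next I would check that the functional
\[
\Phi\colon C(0,1)^2\to\mathbb{R},\qquad \Phi(x,y)=\int_0^1\bigl(x(t)^2+y(t)^2\bigr)\,dt,
\]
is continuous with respect to $\rho$. This is a one-line estimate: using $x(t)^2-x'(t)^2=(x(t)-x'(t))(x(t)+x'(t))$ and the fact that $[0,1]$ has unit Lebesgue measure,
\[
\bigl|\Phi(x,y)-\Phi(x',y')\bigr|\le\rho(x,x')\bigl(\|x\|_\infty+\|x'\|_\infty\bigr)+\rho(y,y')\bigl(\|y\|_\infty+\|y'\|_\infty\bigr),
\]
so $\Phi$ is locally Lipschitz, hence continuous, on all of $C(0,1)^2$.

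Then the continuous mapping theorem, applied to $\Phi$ together with the weak convergence from Theorem~5, yields
\[
\widehat{W}_n^2=\Phi\bigl(\widehat{Z}_n,\widehat{Z'}_n\bigr)\ \Rightarrow\ \Phi\bigl(\widehat{Z}_\theta,\widehat{Z'}_\theta\bigr)=\widehat{W}_\theta^2,
\]
which is the assertion. As in (\ref{wn2}), since $\widehat{Z}_n$ and $\widehat{Z'}_n$ are piecewise linear, $\widehat{W}_n^2$ also has an explicit representation as a quadratic form in the values $\widehat{Z}_n(k/n),\widehat{Z'}_n(k/n)$; this is convenient for evaluating $\widehat{W}_{n,\mathrm{obs}}^2$ in practice but is not needed for the limit theorem.

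There is no genuine obstacle here; the only point requiring (routine) care is to invoke Theorem~5 in the function space where $\Phi$ is continuous, i.e.\ with the uniform topology, which is precisely the mode of convergence established there. Were the convergence available only in the Skorokhod sense on $D(0,1)^2$, one would instead note that integration against Lebesgue measure and pointwise squaring are each continuous for the Skorokhod topology, so $\Phi$ would still be continuous and the same argument would apply verbatim.
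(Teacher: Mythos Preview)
Your argument is correct and is exactly the intended one: the paper states the corollary without proof, treating it as an immediate consequence of Theorem~5 via the continuous mapping theorem, which is precisely what you spell out. The only comment is that the hypothesis should read ``conditions of Theorem~5'' (equivalently, of Theorem~3) rather than Theorem~2, since it is Theorem~5 that provides the weak convergence of $(\widehat{Z}_n,\widehat{Z'}_n)$ you invoke; this is a typo in the paper's statement, not a flaw in your proof.
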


Similarly to (\ref{wn2}), $\widehat{W}_n^2$ has the following representation 
\[
\widehat{W}_n^2=\frac{1}{3n}\sum_{k=1}^{n-1} \widehat{Z}_n\left(\frac{k}{n}\right)\left( 2\widehat{Z}_n\left(\frac{k}{n}\right)+
\widehat{Z}_n\left(\frac{k+1}{n}\right) \right)
\]
\[
+\frac{1}{3n}\sum_{k=1}^{n-1} \widehat{Z'}_n\left(\frac{k}{n}\right)\left( 2\widehat{Z'}_n\left(\frac{k}{n}\right)+
\widehat{Z'}_n\left(\frac{k+1}{n}\right) \right)
\]

The p-value of the goodness-of fit test is $1-\widehat{F}_{\theta}(\widehat{W}_{n,obs}^2)$.
Here $\widehat{F}_{\theta}$ is the cumulative distribution function of $\widehat{W}_{\theta}^2$, and $\widehat{W}_{n,obs}^2$ 
is the observed value of $\widehat{W}_n^2$. Further, the function
$\widehat{F}_{\theta}$ can be found using the approach from Section 3, with replacing 
$\lambda_k$ by $\widehat{\lambda}_k$ 
in the Smirnov's formula, and
$\widehat{\lambda}_k$ are the eigenvalues of the kernel.

\section{Proof of Theorem~1}

We denote by $ {\mathbb X}_i (n) $ a number of balls in urn $ i $.
Let $ \Pi=\{\Pi(t), t \geq 0\}$ be a Poisson process with parameter $1$. The Poissonized version of Karlin model assumes the total number of $\Pi(n)$ balls. According to well-known thinning property of Poisson flows, stochastic processes $\left\{{\mathbb X}_{i}(\Pi(t)) \stackrel{\text { def }}{=} \mathbf{\Pi}_{i}(t), t \geq 0\right\}$ are compound Poisson with intensities $p_{i}$ and are mutually independent for different $i$ 's. The definition implies that for any fixed $n\ge1$, $\tau,t\in[0,1]$
\[
R_{\Pi(t n)}=\sum_{k=1}^{\infty} \mathbf{I}\left(\mathbf{\Pi}_{k}(t n) >0\right)=\sum_{k=1}^{\infty} \mathbf{I}_k(tn), 
\]
\[
R'_{\Pi(\tau n)}=\sum_{k=1}^{\infty} \mathbf{I}\left(\mathbf{\Pi}_{k}(n)-\mathbf{\Pi}_{k}((1-\tau) n)>0\right)=\sum_{k=1}^{\infty} \mathbf{I}'_k(\tau n) .
\]

{\bf Step 1 (covariances)} Let $\tau, t\in[0,1]$ 
\[
{\bf cov}\left(R_{\Pi(t n)}, R'_{\Pi(\tau n)}\right) =\sum_{k=1}^{\infty}{\bf cov}( \mathbf{I}_k(tn),\mathbf{I}'_k(\tau n))
\]
\[
=\sum_{k=1}^{\infty}\left(\mathbf{P}\left(\mathbf{\Pi}_{k}(t n)>0, \mathbf{\Pi}_{k}(n)-\mathbf{\Pi}_{k}((1-\tau)n)>0\right)-(1-e^{-p_k t n})(1-e^{-p_k \tau n})\right)
\]
Note that if $t+\tau>1$, then
\[
\mathbf{P}(\mathbf{\Pi}_{k}(t n)>0, \mathbf{\Pi}_{k}(n)-\mathbf{\Pi}_{k}((1-\tau)n)>0)=
\mathbf{P}(\mathbf{\Pi}_{k}(t n)-\mathbf{\Pi}_{k}((1-\tau)n)>0)
\]
\[
+\mathbf{P}(\mathbf{\Pi}_{k}(t n)-\mathbf{\Pi}_{k}((1-\tau)n)=0,\mathbf{\Pi}_{k}((1-\tau)n)>0,\mathbf{\Pi}_{k}(n)-\mathbf{\Pi}_{k}(t n)>0)
\]
\[
=1-e^{-p_k(t+\tau-1)n}+e^{-p_k(t+\tau-1)n}(1-e^{-p_k(1-\tau)n})(1-e^{-p_k(1-t)n})
\]
\[
=1-e^{-p_k t n}-e^{-p_k \tau n}+e^{-p_k n}.
\]
Hence
\[
{\bf cov}\left(R_{\Pi(t n)}, R'_{\Pi(\tau n)}\right) =\mathbf{I}(t+\tau>1)\sum_{k=1}^{\infty}\left( e^{-p_k(t+\tau)n}-e^{-p_k n}\right)
\]
\[
=\mathbf{I}(t+\tau>1) (\E R_{\Pi((t+\tau)n}-\E R_{\Pi(n)}).
\]
Since 
 \[
{\bf E} R_{\Pi (t n)}/{\bf E} R_{n}\sim t^\theta,  
\]
then
\[
{\bf cov}\left(R_{\Pi(t n)}, R'_{\Pi(\tau n)}\right)/ {\bf E} R_{n}\sim K'(t,\tau).
\]

{\bf  Step~2 (convergence of finite-dimensional distributions)}
Analogously to proof of Theorem 1 in Dutko (1989) we have that, for any fixed $m \geq 1,0<t_{1}<t_{2}<\cdots<t_{m} \leq 1$ triangle array of $2 m $ -dimensional random vectors 
$\left\{
( ( \mathbf{I}_k(n t_{i})-\mathbf{E}\mathbf{I}_k(n t_{i}) )/  \sqrt{{\bf E} R_{n}}, ( \mathbf{I}'_k(n t_{i})-\mathbf{E}\mathbf{I}'_k(n t_{i}) )/  \sqrt{{\bf E} R_{n}}, i \leq m), k \leq n
\right\}_{n \geq 1}$ 
satisfies Lindeberg condition (see Borovkov (2013), Theorem $8.6 .2,$ p. 215 ).

{\bf Step~3 (relative compactness)} 
Since $R_{\Pi(nt)}\stackrel{d}{=}R'_{\Pi(nt)}$, then relative compactness follows from Chebunin and Kovalevskii (2016).

{\bf Step 4 (approximation of the original process) }

It follows from the corresponding step of proof in Chebunin and Kovalevskii (2016) and the previous step.

{\it Theorem~1 is proved.}

\bigskip

{\bf Acknowledgements}

The work is supported by Mathematical Center in Akademgorodok under agreement No. 075-15-2019-1675 with the Ministry of Science and Higher Education of the Russian Federation.

\bigskip

\end{document}